\colorlet{luigi}{green!60!gray}
\colorlet{author1}{blue!60!gray}
\colorlet{author2}{red!60!gray}
\pgfplotsset{compat=newest}
\numberwithin{equation}{section}
\setlist[description]{leftmargin=4mm}
\setlist[itemize]{leftmargin=5mm}
\setlist[enumerate]{leftmargin=6mm}
\newcounter{count}
\theoremstyle{plain}
\newtheorem{theorem}{Theorem}[section]
\newtheorem{lemma}[theorem]{Lemma}
\newtheorem{conjecture}[theorem]{Conjecture}
\newtheorem*{theorem*}{Theorem}
\newtheorem*{corollary*}{Corollary}
\newtheorem*{proposition*}{Proposition}
\newtheorem*{lemma*}{Lemma}
\newtheorem*{exercise*}{Exercise}
\theoremstyle{definition}
\newtheorem{definition}[theorem]{Definition}
\newtheorem*{definition*}{Definition}
\newtheorem*{example*}{Example}
\newtheorem*{question*}{Question}
\theoremstyle{remark}
\newtheorem{remark}[theorem]{Remark}
\newtheorem*{remark*}{Remark}
\renewenvironment{proof}[1][\proofname]{\par
  \pushQED{\qed}%
  \normalfont \topsep6\p@\@plus6\p@\relax
  \trivlist
  \item[\hskip\labelsep
        \bfseries
    #1\@addpunct{.}]\ignorespaces
}{%
  \popQED\endtrivlist\@endpefalse
}
\newcommand{\R}{\mathbb{R}}
\newcommand{\N}{\mathbb{N}}
\newcommand*\dif{\mathop{}\!\mathrm{d}}
\DeclareMathOperator{\dist}{dist}
\DeclareMathOperator{\lip}{Lip}
\DeclareMathOperator{\supp}{supp}
\title{A refined Lusin type theorem for gradients}
\author[L.~De Masi]{Luigi De Masi}
\address{\textit{L.~De Masi:} Dipartimento di Matematica, Università di Trento, Via Sommarive 14, 38123 Trento, Italy}
\email{luigi.demasi@unitn.it}
\author[A.~Marchese]{Andrea Marchese}
\address{\textit{A.~Marchese:} Dipartimento di Matematica, Università di Trento, Via Sommarive 14, 38123 Trento, Italy}
\email{andrea.marchese@unitn.it}
\thanks{{\bf Acknowledgments.} }
\subjclass[2010]{}
\begin{document}
	\begin{abstract}
		We prove a refined version of the celebrated Lusin type theorem for gradients by Alberti, stating that any Borel vector field $f$ coincides with the gradient of a $C^1$ function $g$, outside a set $E$ of arbitrarily small Lebesgue measure. We replace the Lebesgue measure with any Radon measure $\mu$, and we obtain that the estimate on the $L^p$ norm of $Dg$ does not depend on $\mu(E)$, if the value of $f$ is $\mu$-a.e. orthogonal to the decomposability bundle of $\mu$. We observe that our result implies the 1-dimensional version of the flat chain conjecture by Ambrosio and Kirchheim on the equivalence between metric currents and flat chains with finite mass in $\mathbb{R}^n$ and we state a suitable generalization for $k$-forms, which would imply the validity of the conjecture in full generality. 
        \par
\medskip\noindent
{\textsc Keywords:} metric currents, flat chains, normal currents.
\par
\medskip\noindent
{\textsc MSC (2010):} 49Q15, 49Q20.	\end{abstract}

	\maketitle

\section{Introduction}

In this note, we improve the \emph{Lusin type theorem for gradients} by Alberti, see \cite{alberti}, and its generalization for Radon measures, see Theorem 2.1 of \cite{marchese_schioppalipschitz}, by proving the following result. We recall that the decomposability bundle of a Radon measure $\mu$ on $\R^n$, is a Borel map $V(\mu,\cdot):\R^n\to\bigcup_{k\leq n}Gr(k,n)$, taking as values vector subspaces of $\R^n$, see Section \ref{s:notation} for the definition. Given a Borel map $f:\R^n\to\R^n$, we denote by $f_V$ the projection of $f$ on $V(\mu,\cdot)$, that is, the map $f_V:x\mapsto P_{V(\mu,x)}f(x)$, where, for every subspace $L$ of $\R^n$, $P_L$ denotes the orthogonal projection onto $L$. Similarly $f_{V^\perp}$ denotes the projection onto the orthogonal complement of $V(\mu,\cdot)$.

\begin{theorem}\label{t:main}
    Let $\mu$ be a Radon measure on $\R^n$, let $\Omega \subset \R^n$ be an open set such that $\mu(\Omega)< \infty$ and let $f \colon \Omega \to \R^n$ be a Borel map. There exists a constant $C(n)$ such that for every $\varepsilon>0$ there exists a compact set $K \subset \Omega$ and a function $g \colon \Omega \to \R$ of class $C^1$ such that,
    \begin{gather}
        \mu(\Omega \setminus K) < \varepsilon,
        \\[4pt]
        Dg(x)=f(x) \qquad \forall x \in K,
        \\[4pt]
        \|Dg\|_{L^p(\mu)} \leq C\varepsilon^{\frac{1}{p}-1}\|f_V\|_{L^p(\mu)}+ (1+\varepsilon)\|f_{V^\perp}\|_{L^p(\mu)}, \quad\mbox{for every $p\in[1,\infty]$}.
    \end{gather}
\end{theorem}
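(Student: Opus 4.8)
The plan is to treat the two components of $f$ separately. Writing $f = f_V + f_{V^\perp}$ — both Borel, since $V(\mu,\cdot)$ is a Borel map — I would fix $\delta=\varepsilon/2$ and construct: (a) a function $g_1\in C^1(\Omega)$ and a compact $K_1\subset\Omega$ with $\mu(\Omega\setminus K_1)<\delta$, $Dg_1=f_V$ on $K_1$, and $\|Dg_1\|_{L^p(\mu)}\le C(n)\,\delta^{1/p-1}\|f_V\|_{L^p(\mu)}$ for every $p$; and (b) a function $g_2\in C^1(\Omega)$ and a compact $K_2\subset\Omega$ with $\mu(\Omega\setminus K_2)<\delta$, $Dg_2=f_{V^\perp}$ on $K_2$, and $\|Dg_2\|_{L^p(\mu)}\le(1+\delta)\|f_{V^\perp}\|_{L^p(\mu)}$ for every $p$. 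Then $g:=g_1+g_2$ and $K:=K_1\cap K_2$ do the job: $\mu(\Omega\setminus K)<\varepsilon$, $Dg=f_V+f_{V^\perp}=f$ on $K$, and the triangle inequality in $L^p(\mu)$ together with $\delta^{1/p-1}\le 2\,\varepsilon^{1/p-1}$ (absorbed into the constant) gives the asserted bound.

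For (a) I would simply invoke the Lusin-type theorem for gradients of Alberti \cite{alberti} and its Radon-measure version, Theorem~2.1 of \cite{marchese_schioppalipschitz}, applied to the Borel field $f_V$ (which equals its own $V$-component): these produce a $C^1$ function whose gradient equals the prescribed field off a set of $\mu$-measure $<\delta$, and tracking the norms through Alberti's construction yields the stated $L^p$ estimate — the factor $\delta^{1/p-1}$ being exactly the cost of the correction bumps, whose gradient has size $\sim 1/\delta$ on a collar carrying a $\delta$-fraction of the mass. (If one prefers to quote those results only qualitatively, the $L^p$ bound for $f_V$ follows from a dyadic decomposition of $f_V$ along its level sets $\{2^{j-1}\le|f_V|<2^j\}$, applying the theorem to each piece with geometrically decreasing error and with corrections having pairwise essentially disjoint gradient supports, so that the $\ell^p$-sum of the partial estimates closes; I would record this in a preliminary lemma.)

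The new content is (b), and it is where the hypothesis $f_{V^\perp}(x)\in V(\mu,x)^\perp$ for $\mu$-a.e.\ $x$ enters decisively. First some reductions: by Lusin's theorem I would restrict to a compact $K_0$ with $\mu(\Omega\setminus K_0)<\delta/4$ on which $b:=f_{V^\perp}$ is continuous, hence bounded (the general bounded-to-unbounded reduction being the dyadic splitting above), using the local nature of the decomposability bundle, $V(\mu\llcorner K_0,\cdot)=V(\mu,\cdot)$ $\mu$-a.e.\ on $K_0$, so that $b$ stays orthogonal to the relevant bundle. The construction would then follow the inductive scheme behind Alberti's theorem, but with the collars on which the correction bumps have large gradient chosen \emph{transverse} to the decomposability bundle. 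The mechanism I expect to exploit is the Alberti--Marchese structure theory: $V(\mu,\cdot)$ is the maximal bundle along which Lipschitz functions are $\mu$-a.e.\ differentiable, and in the complementary directions $\mu$ carries no one-dimensional structure; consequently, to steer the gradient into a direction lying in $V(\mu,x)^\perp$ one can use bumps whose gradient, though large, is concentrated on layers of \emph{arbitrarily} small $\mu$-measure — not on a fixed $\delta$-fraction of the mass — while staying bounded in supremum by $(1+\delta)|b|$. Iterating on the residual field, which is again bounded by the small parameter and still orthogonal to the bundle, and summing (with the iterates' gradients supported on $\mu$-nearly-disjoint sets so the $\ell^p$-sum closes), one would obtain $g_2$ with the required properties, the constant being genuinely independent of $\mu(\Omega\setminus K)$. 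Finally I would take $K_2\subset K_0\cap\supp\mu$ and use that $Dg_2$ and $b$ are continuous there, so that $\mu$-a.e.\ coincidence on $K_2$ upgrades to coincidence on $K_2$; localizing the construction so that $g_2$ is supported in a small neighbourhood of $K_2$ keeps the leftover set $\Omega\setminus K_0$ under control in the $L^p$ norm.

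The main obstacle is precisely the sharpness in (b): getting the \emph{universal} constant $1+\varepsilon$, rather than a $\mu(E)$-dependent $C_\varepsilon$, forces every correction in the iteration to be essentially $L^p(\mu)$-norm-non-increasing for \emph{all} $p\in[1,\infty]$ simultaneously, which is why the construction must genuinely use the Alberti--Marchese structure of $V(\mu,\cdot)$ — making the bumps flat in the $\mu$-essential directions and letting them vary only in $\mu$-negligible ones — rather than Alberti's original collar construction, whose intrinsic cost is exactly the $\varepsilon^{1/p-1}$ we must beat. A secondary technical burden, already present in \cite{alberti, marchese_schioppalipschitz}, is the reduction from bounded to general $f$ and the control of the $L^p$ (and especially the $p=\infty$) norm on the small exceptional set; these I would handle by the dyadic and cutoff arguments indicated, taking care that no uncontrolled constant is introduced.
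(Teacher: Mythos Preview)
Your high-level strategy matches the paper's exactly: split $f=f_V+f_{V^\perp}$, invoke Theorem~2.1 of \cite{marchese_schioppalipschitz} for $f_V$, and establish the sharper estimate for $f_{V^\perp}$ (the paper's Theorem~\ref{t:main2}) via an iterative scheme that exploits the cone-null-set structure of \cite{AlbMar}. Your description of the mechanism for~(b) --- bumps varying only transversely to the decomposability bundle, hence with bounded sup norm and $\mu$-negligible ``collars'' --- is precisely what the paper's Lemma~\ref{lmm:basic_step} formalises (via Lemmas~4.12 and~7.5 of \cite{AlbMar}), and your iteration is the paper's Lemma~\ref{lmm:approx_continuous}.

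There is, however, one nontrivial step your sketch does not capture. The iterative scheme only delivers a $C^0$ estimate $\|Dg\|_{C^0}\leq(1+\varepsilon)\|h\|_{C^0}$; your proposed passage to $L^p$ (``iterates' gradients supported on $\mu$-nearly-disjoint sets so the $\ell^p$-sum closes'') does not work, because the iterates' gradients are superimposed, not disjoint. The paper's device is a \emph{layer-cake decomposition}: slice the continuous target $h$ radially into $N$ pieces $h_1,\dots,h_N$ with nested supports $\supp h_N\subset\cdots\subset\supp h_1$ and $\|h_i\|_\infty\leq\eta:=\|h\|_\infty/N$, apply the $C^0$ result to each $h_i$ producing $g_i$ with $\supp g_i\subset\supp h_i$, and then observe that the nesting forces the \emph{pointwise} bound $|Dg(x)|\leq|h(x)|+2\eta$ --- whence the $L^p$ estimate follows for every $p$ once $\eta$ is small. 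This, combined with a quantitative form of Lusin's theorem (the paper's Lemma~\ref{lmm:quantitative_Lusin}, which gives $\|h\|_{L^p}\leq(1+\varepsilon)\|f\|_{L^p}$ for \emph{all} $p$ simultaneously from a single continuous approximant), is the missing idea. Separately, your assertion that the residual after one iteration is ``still orthogonal to the bundle'' is not obviously true --- $Dg_{2,1}$ need not take values in $V(\mu,x)^\perp$ --- though the paper's own iteration (Step~2 of Lemma~\ref{lmm:approx_continuous}) passes over the same point without comment.
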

\begin{remark}
\begin{itemize}
    \item[(i)] The main result of \cite{alberti}, corresponds to the choice in which $\mu$ is the Lebesgue measure. In this case $V(\mu,\cdot)\equiv\R^n$, so that the second addendum in the estimate for $\|Dg\|_{L^p(\mu)}$ vanishes.
    \item[(ii)] Our result improves also Theorem 2.1 of \cite{marchese_schioppalipschitz}, providing an estimate for $\|Dg\|_{L^p(\mu)}$ which remains bounded when $\varepsilon\to 0$, if $f(x)$ is orthogonal to $V(\mu,x)$, for $\mu$-a.e. $x\in\Omega$. 
    \item[(iii)]In the latter case, since the Lipschitz constant of $g$ does not depend on $\varepsilon$, one could be tempted to conclude that a stronger conclusion holds. Namely, that there exists a Lipschitz function $g$ such that $D_{f(x)}g(x)=1$, for $\mu$-a.e. $x\in\Omega$. However, this conclusion is false in general. Indeed the natural choice for $g$ could be a subsequential limit as $\varepsilon\to 0$ of the functions $g_\varepsilon$ (depending on $\varepsilon)$ yielded by Theorem \ref{t:main}, but being the limit $g$ only Lipschitz, its directional derivative in direction $f(x)$ could fail to exist, because $f(x)\not\in V(\mu,x)$ for $\mu$-a.e $x$, see Theorem 1.1 (ii) of\cite{AlbMar}. Even in the case in which $D_{f(x)}g(x)$ exists at $\mu$-a.e. $x$, the conclusion above would still not be legitimate, because this directional derivative is not closable from the space of Lipschitz functions to $(L^\infty(\mu),w^*)$, see Theorem 1.1 of \cite{alberti-bate-marchese}.  
\end{itemize}    
\end{remark}
In Section \ref{s:metric}, we state a Conjecture which is a suitable generalization of Theorem \ref{t:main} for $k$-forms and we show that its validity implies the validity of the Ambrosio-Kirchheim flat chain conjecture, see \cite{AK}, modifying a strategy outlined in \cite{MarMer}. In particular we observe that Theorem \ref{t:main} implies the validity of the conjecture in dimension $k=1$.

\subsection*{Acknowledgements}
The authors are supported by INdAM-GNAMPA, through the project "Minimizers for the area functional: existence, regularity and geometrical properties" and by the project PRIN 2022PJ9EFL "Geometric Measure Theory: Structure of Singular Measures, Regularity Theory and Applications in the Calculus of Variations", funded by the European Union - Next Generation EU, Mission 4
Component 2 - CUP:E53D23005860006.

\section{Notation and preliminaries}\label{s:notation}
For any (possibly vector valued) Radon measure $\mu$ on $\R^n$ we denote by $|\mu|$ its total variation measure and by $\mathbb{M}(\mu):=|\mu|(\R^n)$. For any Borel set $E\subset\R^n$, we denote by $\mu\llcorner E$ the restriction of $\mu$ to $E$, that is, the measure $A\mapsto \mu(E\cap A)$, for every Borel set $A$. 

If $e \in \mathbb{S}^{n-1}$ and $\alpha \in \left[0,\frac{\pi}{2}\right]$, we denote by $C(e,\alpha)$ the convex cone with axis $e$ and opening angle $\alpha$, namely
\begin{equation}
    C(e,\alpha)
    \coloneqq
    \{x \in \R^n \colon x \cdot e \geq \cos \alpha |x|\}.
\end{equation}

We recall the following definition, see \S 2.6, \S 6.1 and Theorem 6.4 of \cite{AlbMar}. 

\begin{definition}[Decomposability bundle]
Given a positive Radon measure $\mu$ on $\R^n$ its \emph{decomposability bundle} is any Borel map $V(\mu,\cdot)$ taking values in the set $\bigcup_{k\leq n}Gr(k,n)$, that is, in the union of the Grasmannians of vector subspaces of $\R^n$ of any dimension, with the following property. For $\mu$-a.e. $x\in\R^n$, a vector $v\in\R^n$ belongs to $V(\mu,x)$ if and only if there exists a vector-valued measure $T$ with ${\mathrm {div}} T=0$ such that
$$\lim_{r\to 0}\frac{\mathbb{M}((T-v\mu)\llcorner B(x,r))}{\mu(B(x,r))}=0.$$
\end{definition}

\section{Proof of the main result}

Thanks to Theorem 2.1 of \cite{marchese_schioppalipschitz}, in order to prove Theorem \ref{t:main}, it is sufficient to prove the following result.

\begin{theorem}\label{t:main2}
    Let $\mu$ be a Radon measure on $\R^n$, let $\Omega \subset \R^n$ be an open set such that $\mu(\Omega)< \infty$ and let $f \colon \Omega \to \R^n$ be a Borel function satisfying $f(x) \in V(\mu,x)^\perp$ for $\mu$-a.e.\ $x \in \Omega$. For every $\varepsilon>0$ there exists a compact set $K \subset \Omega$ and a function $g \colon \Omega \to \R$ of class $C^1$ such that
    \begin{gather}
        \mu(\Omega \setminus K) < \varepsilon,
        \\[4pt]
        Dg(x)=f(x) \qquad \forall x \in K,
        \\[4pt]
        \|Dg\|_{L^p(\mu)} \leq (1+\varepsilon)\|f\|_{L^p(\mu)}, \quad\mbox{for every $p\in[1,\infty]$}.
    \end{gather}
\end{theorem}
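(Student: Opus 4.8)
The plan is to prove Theorem~\ref{t:main2} by a Whitney-type extension argument, building $g$ as a limit of successively corrected $C^1$ functions. The key structural input is the characterization of the decomposability bundle: since $f(x)\perp V(\mu,x)$ for $\mu$-a.e.\ $x$, the direction $f(x)/|f(x)|$ is, in a quantitative averaged sense, \emph{not} a direction in which $\mu$ carries any divergence-free ``tangential'' mass. Concretely, I would first establish the following approximation lemma: given $\varepsilon,\delta>0$ and a bounded Borel $f$ with $f\perp V(\mu,\cdot)$ on $\Omega$, there is a compact set $K_1\subset\Omega$ with $\mu(\Omega\setminus K_1)<\delta$ and a function $g_1\in C^1$ with $\|Dg_1\|_\infty\le(1+\varepsilon)\|f\|_\infty$ (and $\|Dg_1\|_{L^p(\mu)}\le(1+\varepsilon)\|f\|_{L^p(\mu)}$) such that $|Dg_1(x)-f(x)|$ is small on $K_1$. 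The mechanism for the one-step lemma is geometric: because $f(x)\notin V(\mu,x)$, by the dual description of $V(\mu,\cdot)$ (Theorem~6.4 of \cite{AlbMar}, relating $V$ to Lipschitz functions whose derivative sees $f$) one can find, after restricting to a large compact set where the relevant densities are controlled, a Lipschitz function whose gradient approximates $f$ with good $L^\infty$ and $L^p(\mu)$ control; mollification then upgrades Lipschitz to $C^1$ while preserving the gradient bounds up to a factor $(1+\varepsilon)$.

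Next I would iterate. Having built $g_1$ on $K_1$ matching $f$ up to error $\eta_1$ in sup norm, apply the one-step lemma to the residual field $f-Dg_1$ on (a subset of) $K_1$ — this residual is again Borel, bounded by $\eta_1$, and one checks it is still $\mu$-a.e.\ orthogonal to $V(\mu,\cdot)$ on the relevant set (since $Dg_1$ is continuous, hence its restriction behaves well, and the orthogonality constraint passes to the difference once we note $Dg_1(x)$ can be arranged to lie essentially in $V(\mu,x)^\perp$ too on the good set) — producing $g_2$ with $\|Dg_2\|_\infty\lesssim\eta_1$ and a compact $K_2\subset K_1$ with $\mu(K_1\setminus K_2)<\delta 2^{-2}$. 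Choosing the error parameters $\eta_j\to0$ geometrically (say $\eta_{j+1}\le\eta_j/2$) and the measure deficits summing to $<\varepsilon$, the series $g=\sum_j g_j$ converges in $C^1$, $K=\bigcap_j K_j$ is compact with $\mu(\Omega\setminus K)<\varepsilon$, and $Dg=f$ on $K$. For the $L^p(\mu)$ estimate: $\|Dg\|_{L^p(\mu)}\le\sum_j\|Dg_j\|_{L^p(\mu)}\le(1+\varepsilon)\|f\|_{L^p(\mu)}+\sum_{j\ge2}C\eta_{j-1}\mu(\Omega)^{1/p}$, so by making the $\eta_j$ decay fast enough relative to a fixed budget one absorbs the tail into the $(1+\varepsilon)$ factor — here it is essential that the first step already achieves the constant $(1+\varepsilon)$ rather than something like $C(n)$, which is exactly where the hypothesis $f\perp V(\mu,\cdot)$ is used (when $f$ has a $V$-component, Alberti's construction forces a large constant).

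The main obstacle is the one-step lemma with the near-optimal constant $(1+\varepsilon)$. The naive Whitney extension of $f$ off a compact set of near-full $\mu$-measure only controls $\|Dg\|_\infty$ by $C(n)\|f\|_\infty$, because on the complement one must interpolate and the interpolation inflates the gradient. The orthogonality hypothesis is what lets us do better: it guarantees that we may choose the compact set $K_1$ and coordinates so that $\mu$ is, at small scales around $\mu$-a.e.\ point of $K_1$, ``thin'' in the direction of $f$ — formally, that there is a direction-selecting vector field realizing $f$ as a gradient-to-first-order with a 1-dimensional (rather than full) extension cost. Quantifying this via the cone geometry $C(e,\alpha)$ introduced in \S\ref{s:notation}: one shows that for $\mu$-a.e.\ $x\in\Omega$ and every $\alpha>0$, the set $\{y: y-x\in C(f(x)/|f(x)|,\alpha)\}$ has $\mu$-density governed by the fact that $f(x)\perp V(\mu,x)$, allowing a one-dimensional Whitney extension along (approximately) the $f$-direction whose gradient norm is $(1+\varepsilon)|f(x)|$ rather than $C(n)|f(x)|$. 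Making this precise — in particular handling the measurable selection of these cone directions, the Egorov/Lusin reductions to get uniform control on a single compact set, and the $C^1$ (not merely Lipschitz) regularity of the extension — is the technical heart of the argument; the iteration scheme above is then a soft wrapper around it.
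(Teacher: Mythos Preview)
Your overall architecture matches the paper's: a one-step approximation lemma built from the cone geometry $C(e,\alpha)$, followed by iteration on the residual to force exact equality $Dg=f$ on a large compact set. You also correctly identify the mechanism behind the near-optimal constant: since $e=f(x)/|f(x)|\perp V(\mu,x)$, the cone $C(e,\alpha)$ avoids $V(\mu,x)$ for \emph{every} $\alpha<\pi/2$, so one may take $\alpha$ close to $\pi/2$ and obtain $|Dw|\le(1+1/\tan\alpha)\|f\|_\infty\le(1+\varepsilon)\|f\|_\infty$ in the local building block (the paper's Lemma~\ref{lmm:basic_step}).

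The genuine gap is your claim that the one-step lemma already delivers $\|Dg_1\|_{L^p(\mu)}\le(1+\varepsilon)\|f\|_{L^p(\mu)}$ for all $p\in[1,\infty]$ simultaneously. The cone construction naturally controls $|Dg_1(x)|$ by $(1+\varepsilon)\|f\|_{L^\infty(B_i)}$ on each covering ball $B_i$ and on the surrounding transition annuli; it does \emph{not} give a pointwise comparison $|Dg_1(x)|\lesssim|f(x)|$, which is what a uniform-in-$p$ $L^p$ bound would need. Your summation $\|Dg\|_{L^p}\le(1+\varepsilon)\|f\|_{L^p}+\sum_{j\ge2}C\eta_{j-1}\mu(\Omega)^{1/p}$ therefore rests on an unproven first term. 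The paper supplies the missing idea: after a quantitative Lusin step (Lemma~\ref{lmm:quantitative_Lusin}, itself delicate because all $L^p$ norms must be preserved at once) reducing to continuous $h$, one runs your iteration \emph{only} to obtain $C^0$ control (Lemma~\ref{lmm:approx_continuous}), and then recovers $L^p$ control via a layer-cake decomposition $h=h_1+\dots+h_N$ with $\|h_i\|_{C^0}\le\eta$ and nested supports $\supp h_{i+1}\subset\subset\supp h_i$. Applying the $C^0$ result to each layer separately and using $\supp g_i\subset\supp h_i$ yields the pointwise inequality $|Dg(x)|\le|h(x)|+2\eta$, from which all $L^p$ estimates follow immediately.

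A smaller point: your assertion that the residual $f-Dg_1$ is again $\mu$-a.e.\ orthogonal to $V(\mu,\cdot)$ because ``$Dg_1(x)$ can be arranged to lie essentially in $V(\mu,x)^\perp$'' is not supported by the construction --- the auxiliary function from Lemma~4.12 of \cite{AlbMar} only gives $|Dw_0-e|<1/\tan\alpha$ with no information on which subspace the error inhabits. Your own bookkeeping (allowing a constant $C$ rather than $1+\varepsilon$ for $j\ge2$, and absorbing the tail via the positive infimum $\inf_p\|f\|_{L^p}/\mu(\Omega)^{1/p}$) would in fact let you sidestep this by invoking the standard Alberti--Marchese--Schioppa result in the later steps, so this is fixable; but the $L^p$ issue in the first step is not.
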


The proof is based on the following lemmas.

\begin{lemma}[quantitative version of Lusin's theorem]\label{lmm:quantitative_Lusin}
    Let $\mu$ be a Radon measure on $\R^n$, let $\Omega \subset \R^n$ be an open set such that $\mu(\Omega)< \infty$ and let $f \colon \Omega \to \R^n$ be a Borel function. Then, for every $\varepsilon>0$, there exists $h \in C_c(\Omega, \R^n)$ such that
    \begin{gather}
        \mu \left( \{x \in \Omega \colon h(x) \neq f(x)\} \right) < \varepsilon,
        \\[5pt]
        \| h\|_{L^p(\Omega)} \leq (1+\varepsilon) \|f\|_{L^p(\Omega)}
        \qquad
        \forall p \in [1,+\infty].
    \end{gather}
\end{lemma}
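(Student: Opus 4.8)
The plan is to combine the classical Lusin theorem for Radon measures with the Tietze extension theorem and an Urysohn cutoff; the only genuinely delicate point is to choose the exceptional set small enough that the $L^p$ bound holds with the \emph{same} function $h$ for every $p\in[1,\infty]$ simultaneously, and not merely with an additive error. I would start with two reductions. If $f=0$ $\mu$-a.e.\ the choice $h=0$ works, so assume otherwise. If $\|f\|_{L^\infty(\Omega)}=+\infty$, pick $M_0<\infty$ with $\mu(\{|f|>M_0\})<\varepsilon/3$ and replace $f$ by $f\,\mathbbm 1_{\{|f|\le M_0\}}$: this modifies $f$ on a set of $\mu$-measure $<\varepsilon/3$, does not increase $\|f\|_{L^p(\Omega)}$ for any $p$, and makes the eventual $L^\infty$-estimate automatic. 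Thus we may assume $M:=\|f\|_{L^\infty(\Omega)}<\infty$ and, after discarding the $\mu$-null set $\{|f|>M\}$, that $|f|\le M$ everywhere. It is crucial here that $M$ be exactly the \emph{essential supremum} of $|f|$: this guarantees that $\int_\Omega(|f|/M)^p\,d\mu$ does not decay exponentially as $p\to\infty$, since $\int_\Omega(|f|/M)^p\,d\mu\ge(1-s)^p\,\mu(\{|f|>(1-s)M\})$ and $\mu(\{|f|>(1-s)M\})>0$ for every $s\in(0,1)$.

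Next I would fix $\delta\in(0,\varepsilon)$ so small that, in addition,
\[
    M^p\,\delta\ \le\ \big((1+\varepsilon)^p-1\big)\,\|f\|_{L^p(\Omega)}^p\qquad\text{for every }p\in[1,\infty).
\]
Such a $\delta$ exists because $p\mapsto\big((1+\varepsilon)^p-1\big)\|f\|_{L^p(\Omega)}^p\,M^{-p}=\big((1+\varepsilon)^p-1\big)\int_\Omega(|f|/M)^p\,d\mu$ is continuous on $[1,\infty)$ (by dominated convergence, using $\mu(\Omega)<\infty$), is strictly positive at $p=1$ (as $f\not\equiv0$), and tends to $+\infty$ as $p\to\infty$ by the exponential lower bound above combined with $(1+\varepsilon)^p\to\infty$; hence its infimum over $[1,\infty)$ is strictly positive. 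With this $\delta$ fixed, I apply Lusin's theorem: there is a compact $K_0\subset\Omega$ with $\mu(\Omega\setminus K_0)<\delta$ such that $f|_{K_0}$ is continuous. I then extend $f|_{K_0}$ componentwise by Tietze and compose with the $1$-Lipschitz nearest-point projection of $\R^n$ onto $\overline{B(0,M)}$, obtaining $\bar h\in C(\R^n,\R^n)$ with $\bar h=f$ on $K_0$ and $|\bar h|\le M$ everywhere; finally I set $h:=\phi\,\bar h$, where $\phi\in C_c(\Omega)$, $0\le\phi\le1$, $\phi\equiv1$ on $K_0$. Then $h\in C_c(\Omega,\R^n)$, $h=f$ on $K_0$, and $|h|\le M$ on $\Omega$.

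It remains to verify the two estimates. Since $\{h\neq f\}\subset\Omega\setminus K_0$ we get $\mu(\{h\neq f\})<\delta<\varepsilon$; for $p=\infty$, $\|h\|_{L^\infty(\Omega)}\le M=\|f\|_{L^\infty(\Omega)}\le(1+\varepsilon)\|f\|_{L^\infty(\Omega)}$; and for $p\in[1,\infty)$,
\[
    \int_\Omega|h|^p\,d\mu=\int_{K_0}|f|^p\,d\mu+\int_{\Omega\setminus K_0}|h|^p\,d\mu\le\|f\|_{L^p(\Omega)}^p+M^p\,\mu(\Omega\setminus K_0)\le(1+\varepsilon)^p\,\|f\|_{L^p(\Omega)}^p
\]
by the choice of $\delta$. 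The main obstacle is precisely this uniform-in-$p$ choice of $\delta$: Lusin and Tietze only give $|h|\le M$ on the small set $\Omega\setminus K_0$, hence an additive error $M^p\mu(\Omega\setminus K_0)$, and upgrading it to a multiplicative $(1+\varepsilon)$-error for \emph{all} $p$ at once forces one to truncate $f$ at its essential supremum, so that $\|f\|_{L^p(\Omega)}^p\,M^{-p}$ cannot be overtaken by the exponential growth of $(1+\varepsilon)^p-1$.
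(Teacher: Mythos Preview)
Your argument is correct. The only cosmetic blemish is bookkeeping in the first reduction: after spending $\varepsilon/3$ on the truncation $\{|f|>M_0\}$, you should run the main step with a budget $\delta<2\varepsilon/3$ rather than $\delta<\varepsilon$; this is harmless. The positivity of $\inf_{p\ge1}\big((1+\varepsilon)^p-1\big)\int_\Omega(|f|/M)^p\,d\mu$ is justified exactly as you say: continuity in $p$ by dominated convergence, strict positivity, and divergence at infinity obtained by fixing $s<\varepsilon/(1+\varepsilon)$ so that $(1+\varepsilon)(1-s)>1$ and using $\mu(\{|f|>(1-s)M\})>0$.

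The paper follows the same architecture (truncate, Lusin, project onto a ball, cut off near $\partial\Omega$) but resolves the uniform-in-$p$ issue differently. Instead of your compactness argument for the choice of $\delta$, the authors fix a level $t$ so that the super-level set $E=\{|f|\ge t\}$ has measure either $<\varepsilon/3$ or equals the set where the essential supremum is attained, and then take the Lusin exceptional set $F$ with $\mu(F)<\varepsilon\,\mu(E)$. The estimate then follows from the elementary concavity inequality
\[
\big(a+(1+\varepsilon)b\big)^{1/p}-(a+b)^{1/p}\le\varepsilon\,b^{1/p},
\]
applied with $a=\int_{\Omega\setminus(E\cup F)}|f|^p\,d\mu$ and $b=t^p\mu(E)$. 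Their approach gives an explicit quantitative relation ($\mu(F)<\varepsilon\,\mu(E)$) between the exceptional set and a distinguished level set of $f$; yours is slightly shorter and avoids the case distinction on $E$, at the price of an implicit (existence-only) choice of $\delta$.
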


\begin{remark}
    Notice that $h$ depends only on $f$ and $\varepsilon$ and does not depend on $p$.
\end{remark}

\begin{proof}[Proof of lemma \ref{lmm:quantitative_Lusin}]
    Of course we can assume $f \not \equiv 0$, otherwise there is nothing to prove, and $\varepsilon<1$. Moreover, we can assume that $f \in L^p(\Omega)$ for some $p \in [1,+\infty]$, otherwise the thesis is just the classical Lusin's theorem, up to performing a cut-off of the resulting continuous function in a small neighborhood of $\partial \Omega$ (see part 3 of this proof).
    
    For every $t > 0$ we call
    \begin{equation}
        E(f,t) \coloneqq \{x \in \Omega \colon |f(x)|\geq t \}.
    \end{equation}
    \begin{description}
        \item[1 - Truncation.]
        We first observe that, since $f \in L^p(\Omega)$ for some $p \in [1,+\infty]$, there exists $t>0$ such that either $0<\mu\big(E(f,t)\big)< \frac{\varepsilon}{3}$ or
        \begin{equation}\label{eq:misura>eps}
            \mu\big(E(f,t)\big) \geq \frac{\varepsilon}{3}
            \qquad
            \text{and}
            \qquad
            \mu\big(E(f,s)\big) = 0 \quad \forall s>t.
        \end{equation}
        In either case, we call $E \coloneqq E(f,t)$ and we construct $f_1$ by projecting $f$ on the sphere of radius $t$, namely we define
        \begin{equation}
            f_1(x)
            =
            \begin{cases}
                f(x)                        & \text{if } |f(x)| < t
                \\[8pt]
                t \dfrac{f(x)}{|f(x)|}      & \text{if } |f(x)| \geq t.
            \end{cases}
        \end{equation}
        Clearly $\|f_1\|_{L^\infty(\Omega)} = t$ and, since $|f_1(x)| \leq |f(x)|$ for every $x \in \Omega$, we have
        \begin{gather}
            \|f_1\|_{L^p(\Omega)} \leq  \|f\|_{L^p(\Omega)}
            \qquad
            \forall p \in [1,+\infty].
        \end{gather}
        Moreover $f_1=f$ $\mu$-a.e. in case \eqref{eq:misura>eps} holds, whereas
        \begin{equation}\label{eq:difference_f_1_f}
            \mu \big( \{x \in \Omega \colon f_1(x) \neq f(x)\}\big)
            \leq \mu(E)
            < \frac{\varepsilon}{3},
        \end{equation}
        in the other case.
        \item[2 - Continuous approximation.]
        Chosing $0 < \eta < \varepsilon \min\left\{\mu(E),\frac{\varepsilon}{3}\right\}$, by classical Lusin's theorem there exists a continuous function $f_2 \colon \Omega \to \R$ such that $F \coloneqq \{x \in \Omega \colon f_1(x) \neq f_2(x) \}$ satisfies
        \begin{equation}
            \mu(F) < \eta.
        \end{equation}
        Up to projecting on $B_t(0)$, we can assume $\|f_2\|_{C(\Omega)} \leq t$, thus
        \begin{equation}\label{eq:norm_infty_f_2}
            \|f_2\|_{L^\infty(\Omega)} \leq \|f_2\|_{C(\Omega)}  \leq t = \|f_1\|_{L^\infty(\Omega)} \leq \|f\|_{L^\infty(\Omega)}.
        \end{equation}
        We now estimate the difference $\|f_2\|_{L^{p}(\Omega)} - \|f\|_{L^{p}(\Omega)}$ uniformly for $p \in [1,+\infty)$. We have
        \begin{equation}
            \int_{\Omega} |f_2(x)|^p \dif \mu(x)
            \leq
            \int_{\Omega \setminus (F \cup E)} |f(x)|^p \dif \mu(x)
            +
            t^p \mu(F \cup E),
        \end{equation}
        while
        \begin{equation}
            \int_{\Omega} |f(x)|^p \dif \mu(x)
            \geq
            \int_{\Omega \setminus (F \cup E)} |f(x)|^p \dif \mu(x)
            +
            t^p \mu(E).
        \end{equation}
        By these estimates, using $\mu(F) < \varepsilon \mu(E)$ and the relation (due to the concavity of the function $s \mapsto s^{\frac{1}{p}}$ and $p \geq 1$)
        \begin{equation}
            \big(a+(1+\varepsilon)b \big)^{\frac{1}{p}} - (a+b)^{\frac{1}{p}}
            \leq
            b^{\frac{1}{p}}\big[ (1+\varepsilon)^{\frac{1}{p}} - 1 \big]
            \leq
            \varepsilon b^{\frac{1}{p}}
            \qquad
            \forall a,b \geq 0, \,\,\, \forall p \in [1,+\infty),
        \end{equation}
        we obtain
        \begin{equation}\label{eq:norm_p_f_2}
            \|f_2\|_{L^{p}(\Omega)} - \|f\|_{L^{p}(\Omega)}
           \leq
           \varepsilon t \mu(E)^{\frac{1}{p}}
           \leq
           \varepsilon \|f\|_{L^p(\Omega)}
           \qquad
           \forall p \in [1,+\infty].
        \end{equation}
        Gathering \eqref{eq:norm_infty_f_2} and \eqref{eq:norm_p_f_2} we get
        \begin{equation}\label{eq:norm_f_2}
            \|f_2\|_{L^{p}(\Omega)} \leq (1+\varepsilon) \|f\|_{L^{p}(\Omega)}
            \qquad
            \forall p \in [1,+\infty].
        \end{equation}
        We moreover record that the choice of $\eta$ and \eqref{eq:difference_f_1_f} implies
        \begin{equation}\label{eq:measure_f_2}
           \mu \big( \{x \in \Omega \colon f_2(x) \neq f(x) \} \big) < \frac{2}{3} \varepsilon.
        \end{equation}

        \item[3 - Cut-off.]
        Since $\Omega$ is open and $\mu$ is a Radon measure, there exists $\rho>0$ such that
        \begin{equation}\label{eq:measure_cut-off}
            \mu\Big(\big\{x \in \Omega \colon \dist(x, \partial \Omega) \leq \rho \big\} \Big)
            <
            \frac{\varepsilon}{3}.
        \end{equation}
        We choose any cut-off function $\chi \in C_c(\Omega)$ such that $\chi(x) = 1$ if $\dist(x,\partial \Omega) \geq \rho$ and we define $h \coloneqq \chi f_2$.  By $|h| \leq |f_2|$, \eqref{eq:norm_f_2}, \eqref{eq:measure_f_2} and \eqref{eq:measure_cut-off}, $h$ is the desired function.
    \end{description}  
\end{proof}

\begin{remark}
    If $f$ is such that
    \begin{equation}
                \forall \eta>0 \,\,\, \exists t>0 \colon \quad \eta > \mu\big(E(f,t)\big) >0,
    \end{equation}
    then by a similar procedure one can in fact construct a continuous function $h$ with compact support which differs from $f$ on a small set and
    \begin{equation}
        \| h\|_{L^p(\Omega)} \leq \|f\|_{L^p(\Omega)}
        \qquad
        \forall p \in [1,+\infty].
    \end{equation}
\end{remark}

We recall that $C(e,\alpha)$ is the convex cone with axis $e$ and angle $\alpha$, see section \ref{s:notation}.

\begin{lemma}\label{lmm:basic_step}
    Let $\mu$ be a Radon measure on an open set $\Omega\subset\R^n$, let $B\subset\subset\Omega$ be an open ball and assume that there exists $e\in\mathbb{S}^{n-1}$ and $\alpha>0$ such that $V(\mu,x)\cap C(e,\alpha)=0$ for $\mu$-a.e. $x\in B$. Let $\delta>0$ and let $f \colon \Omega \to \R^n$ be a Borel function such that
    \begin{equation}\label{e:parallel}
        \big|f(x)-\|f\|_{L^\infty(B)} e\big|\leq \delta \qquad \text{for every $x \in B$.}
    \end{equation}  
    Then, for every $\eta>0, \gamma>0$ there exist an open set $U\subset B$ and a function $w \colon \Omega\to \R$ of class $C^1$ such that
    \begin{gather}
        \sup_B|w|\leq \eta,
        \\[4pt]
        \mu(B\setminus U)\leq\gamma,
        \\[4pt]
        |Dw(x)-f(x)|\leq 4\delta+\frac{\|f\|_{\infty}}{\tan \alpha} \qquad \mbox{ for every $x \in U$},
        \\[4pt]
        |Dw(x)|\leq  \|f\|_{L^\infty(B)} \left(1 + \frac{1}{\tan\alpha}\right) \qquad \mbox{ for every $x \in B$}.
    \end{gather}
\end{lemma}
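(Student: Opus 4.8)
The plan is to take $w$ to be a mollification of $\|f\|_{L^\infty(B)}\,(\psi\circ u)$, where $\psi\colon\R\to\R$ is a monotone Lipschitz ``ramp'' and $u\colon\R^n\to\R$ is a Lipschitz function whose level sets are adapted to $\mu$. If $\|f\|_{L^\infty(B)}=0$ then $|f|\le\delta$ on $B$ by \eqref{e:parallel}, and $w\equiv 0$, $U:=B$ work; so assume $a:=\|f\|_{L^\infty(B)}>0$. The cone hypothesis is used only through the following structural fact (a consequence of the characterization of the decomposability bundle, see \cite{AlbMar,marchese_schioppalipschitz}): since $V(\mu,x)\cap C(e,\alpha)=0$ for $\mu$-a.e.\ $x\in B$, there is a Lipschitz function $u\colon\R^n\to\R$ with $|Du(x)-e|\le 1/\tan\alpha$ for $\mathcal{L}^n$-a.e.\ $x$ such that $u_\#(\mu\llcorner B)$ is singular with respect to the one-dimensional Lebesgue measure $\mathcal{L}^1$; equivalently, $\mu\llcorner B$ is concentrated on a Borel set $N_0\subset B$ with $u(N_0)$ Lebesgue-negligible. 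Heuristically, not being decomposable along any direction in $C(e,\alpha)$ forces $\mu$ to lie, up to a $\mu$-null set, in a union of level sets of such a $u$, and the bound $1/\tan\alpha$ is precisely what makes the tangent hyperplanes $(Du(x))^\perp$ to those level sets avoid $C(e,\alpha)$, consistently with $V(\mu,x)\subset(Du(x))^\perp$. By inner regularity of $\mu$, fix a compact $N\subset N_0$ with $\mu(B\setminus N)<\gamma$; then $u(N)$ is compact and $\mathcal{L}^1(u(N))=0$.

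Granting this, the rest is elementary. Choose a bounded open set $O\subset\R$ with $u(N)\subset O$ and $\mathcal{L}^1(O)\le\eta/a$, and set $\psi(t):=\mathcal{L}^1(O\cap(-\infty,t))$; this $\psi$ is $1$-Lipschitz and nondecreasing, $0\le\psi\le\mathcal{L}^1(O)$, and $\psi'\equiv 1$ on the open set $O$. Hence $w_0:=a\,(\psi\circ u)$ is Lipschitz with $\|w_0\|_\infty\le a\,\mathcal{L}^1(O)\le\eta$, with $|Dw_0|\le a\,|Du|\le a(1+1/\tan\alpha)$ a.e., and with $Dw_0=a\,Du$ a.e.\ on the open set $u^{-1}(O)$ (by the chain rule, as $\psi'=1$ there). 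Let $\varphi_\rho$ be a standard mollifier supported in $\overline{B(0,\rho)}$ with $0<\rho<\dist(N,(u^{-1}(O))^c)$, a positive number since $N$ is compact and contained in the open set $u^{-1}(O)$, and put
\[
    w:=w_0*\varphi_\rho\in C^\infty(\R^n),\qquad U:=\{x\in B:\dist(x,(u^{-1}(O))^c)>\rho\}.
\]
Then $U$ is open, $U\subset B$ and $N\subset U$, so $\mu(B\setminus U)\le\mu(B\setminus N)<\gamma$; also $\sup_B|w|\le\|w_0\|_\infty\le\eta$ and $|Dw|\le\|Dw_0\|_\infty\le a(1+1/\tan\alpha)$ on $B$. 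Finally, for $x\in U$ one has $\overline{B(x,\rho)}\subset u^{-1}(O)$, hence $Dw(x)=a\,(Du*\varphi_\rho)(x)$, so that, using $e*\varphi_\rho=e$ and \eqref{e:parallel},
\[
    |Dw(x)-f(x)|\le a\,|((Du-e)*\varphi_\rho)(x)|+|ae-f(x)|\le\frac{a}{\tan\alpha}+\delta\le 4\delta+\frac{\|f\|_\infty}{\tan\alpha}.
\]

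The only hard part is the structural fact of the first paragraph: extracting from the quantitative cone condition on $V(\mu,\cdot)$ a \emph{single} Lipschitz function $u$ that has gradient uniformly within $1/\tan\alpha$ of $e$ and at the same time ``foliates'' $\mu$, i.e.\ $u_\#(\mu\llcorner B)\perp\mathcal{L}^1$. This is where the full strength of the decomposability-bundle machinery of \cite{AlbMar} enters; the remainder is routine, the only care being to fix the data in the correct order ($N$ compact, then $O$, then $\rho$ small enough) so that $w$ is genuinely $C^1$ and the gradient estimate on $U$ closes with the constant as stated.
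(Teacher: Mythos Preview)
Your ``structural fact'' is false as stated, and this breaks the proof. The claim is that whenever $V(\mu,x)\cap C(e,\alpha)=\{0\}$ for $\mu$-a.e.\ $x\in B$, there is a Lipschitz $u$ with $|Du-e|\le 1/\tan\alpha$ for $\mathcal{L}^n$-a.e.\ $x$ and $u_\#(\mu\llcorner B)\perp\mathcal{L}^1$. Take $n=2$, $e=e_1$, any $\alpha\in(\pi/4,\pi/2)$ (so $\beta:=1/\tan\alpha<1$), and let $\mu$ be $\mathcal{H}^1$ restricted to $(\Gamma_1\cup\Gamma_2)\cap B$, where $\Gamma_1=\{x_1=0\}$ and $\Gamma_2=\{x_1=\epsilon x_2\}$ for small $\epsilon>0$. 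Both tangent directions are outside $C(e_1,\alpha)$, so the hypothesis holds. If $u_\#(\mu\llcorner B)\perp\mathcal{L}^1$ then $u(\Gamma_i\cap B)$ is $\mathcal{L}^1$-null; being the continuous image of an interval it is a point, so $u$ is constant on each $\Gamma_i\cap B$, and the two constants coincide at the origin. Hence $u(\epsilon t,t)=u(0,t)$ for every $t$, while for a.e.\ $t$ the global bound gives $\partial_1 u(\cdot,t)\ge 1-\beta>0$ a.e., forcing $u(\epsilon t,t)-u(0,t)\ge(1-\beta)\epsilon t>0$: a contradiction. Thus no such $u$ exists.

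The global bound on $|Du-e|$ is exactly what your mollification step needs to control $|Dw-ae|$ on $U$, so the argument cannot be repaired just by weakening the conclusion of the structural fact. The paper proceeds differently: it invokes Lemma~7.5 of \cite{AlbMar} to pass to a compact $C(e,\alpha)$-null set $K$ with $\mu(B\setminus K)<\gamma$, and then quotes Lemma~4.12 of \cite{AlbMar}, which \emph{already} produces a smooth $w_0$ with $|w_0|\le\varepsilon$, $|Dw_0-e|<1/\tan\alpha$ on $K$ only, and the weaker global bound $|Dw_0|<1+1/\tan\alpha$. No mollification is needed, and one passes from $K$ to an open $U$ by continuity of $Dw_0$. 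Your ramp-and-mollify scheme is in fact close to how Lemma~4.12 is proved, but the correct intermediate object does not satisfy $|Du-e|\le 1/\tan\alpha$ globally; the point is that the gradient is allowed to swing away from $e$ off $K$ so that the function can come back down and stay bounded by $\varepsilon$.
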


\begin{proof}
By Lemma 7.5 of \cite{AlbMar}, there exists a $C(e,\alpha)$-null Borel set $F\subset B$ such that $\mu(B\setminus F)=0$. Hence we can find a compact subset $K\subset F$ such that $\mu(B\setminus K)<\gamma$. Let $w_0$ be the smooth function constructed in Lemma 4.12 of \cite{AlbMar}, with $\varepsilon=\frac{\eta}{\|f\|_\infty+\delta}$. 
Combining properties (ii) and (iii) of Lemma 4.12 of \cite{AlbMar}, we deduce that
$$\left|Dw_0(x)- e \right|<\frac{1}{\tan \alpha},\qquad\mbox{for every $x\in K$}$$
and
$$|Dw_0(x)|<1+\frac{1}{\tan \alpha},\qquad\mbox{for every $x\in \R^n$}.$$
By \eqref{e:parallel} the function $w:=\|f\|_\infty w_0$ satisfies:
    \begin{gather}
        \sup_B|w|\leq \eta,
        \\[4pt]
        |Dw(x)-f(x)|\leq \big|Dw(x)-\|f\|_{L^\infty(B)} e\big|+\big|\|f\|_{L^\infty(B)} e-f(x)\big| \leq \frac{\|f\|_{L^\infty(B)}}{\tan\alpha} + \delta  \qquad \mbox{ for every $x \in K$},
        \\[4pt]
        |Dw(x)|\leq \|f\|_{L^\infty(B)} \left(1 + \frac{1}{\tan\alpha}\right) \qquad \mbox{ for every $x \in B$}.
    \end{gather}
    Since $w$ is of class $C^1$, we can take a sufficiently small  open tubular neighbourhood $U$ of $K$ in $B$ so that, for every $x \in U$, denoting $y$ a point in $K$ which minimizes the distance to $x$, we have
    \begin{equation}
    \begin{split}
        |Dw(x)-f(x)|&\leq |Dw(x)-Dw(y)|+|Dw(y)-f(y)|+|f(y)-f(x)|\\
        &\leq \delta +\sup_{y\in K}|Dw(y)-f(y)|+\operatorname{osc}_{B}(f)
        \\
        &\leq 4\delta+\frac{\|f\|_{L^\infty(B)}}{\tan \alpha}.
    \end{split}
    \end{equation}
\end{proof}

\begin{lemma}[approximation of continuous functions with compact support]\label{lmm:approx_continuous}
        Let $\mu$ be a Radon measure on $\R^n$, let $\Omega \subset \R^n$ be an open set such that $\mu(\Omega)< \infty$ and let $h \colon \Omega \to \R^n$ be a continuous function with compact support in $\Omega$ satisfying $h(x) \in V(\mu,x)^\perp$ for $\mu$-a.e.\ $x \in \Omega$. Then for every $\varepsilon>0$ there exists a compact set $K \subset \Omega$ and a function $g \colon \Omega \to \R$ of class $C^1$ such that
    \begin{gather}
        \mu(\Omega \setminus K) < \varepsilon,
        \\[4pt]
        \supp g \subseteq \supp h,
        \\[4pt]
        Dg(x)=h(x) \qquad \forall x \in K,
        \\[4pt]
        \|Dg\|_{C^0(\Omega)} \leq (1+\varepsilon) \|h\|_{C^0(\Omega)}.
    \end{gather}
\end{lemma}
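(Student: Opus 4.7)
The plan is to construct $g$ as a uniformly convergent series $g = \sum_{k \geq 1} g_k$ of $C^1$ functions, each built by applying Lemma~\ref{lmm:basic_step} on a Vitali-disjoint family of balls. Without loss of generality $\|h\|_{C^0(\Omega)}>0$. First, I fix an opening angle $\alpha \in (0,\pi/2)$ so close to $\pi/2$ that $1/\tan\alpha$ is much smaller than $\varepsilon$; this is what will eventually produce the factor $(1+\varepsilon)$ in the estimate. Applying Lusin's theorem to the Borel map $V(\mu,\cdot)$, valued in the compact metric space $\bigcup_{k \leq n}Gr(k,n)$, I restrict to a compact $Z \subset \Omega$ with $\mu(\Omega \setminus Z)<\varepsilon/2$ on which $V(\mu,\cdot)$ is continuous and the pointwise orthogonality $h(x) \in V(\mu,x)^\perp$ holds.

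For each $x_0 \in Z \cap \{h \neq 0\}$, the continuity of $h$ and the identity $v \cdot e(x_0) = v \cdot (h(x_0)-h(y))/|h(x_0)|$ (valid for $v \in V(\mu,y)$, since $v \perp h(y)$) let me choose a ball $B(x_0,r)$ on which $h$ is nearly the constant vector $|h(x_0)|\, e(x_0)$, with $e(x_0):=h(x_0)/|h(x_0)|$, and on which the cone condition $V(\mu,y) \cap C(e(x_0),\alpha) = \{0\}$ holds for $\mu$-a.e.\ $y$. A Vitali covering then selects a countable disjoint family $\{B_j\} \subset \supp h$ covering $\mu$-most of $Z \cap \{h \neq 0\}$. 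On each $B_j$, I apply Lemma~\ref{lmm:basic_step} with axis $e_j$, angle $\alpha$, and parameters $\eta_j,\gamma_j$ small and summable in $j$; multiplying each resulting $w_j$ by a smooth cutoff supported in $B_j$ (the extra gradient contribution being $O(\eta_j/r_j)$, hence negligible for small enough $\eta_j$) yields $C^1$ functions compactly supported in $B_j$. Their sum $g_1 := \sum_j w_j$ is $C^1$, with $\supp g_1 \subseteq \supp h$, $\|Dg_1\|_{C^0} \leq (1+\varepsilon/2)\|h\|_{C^0}$, and $|Dg_1-h| \leq C\varepsilon\|h\|_{C^0}$ on $K_1 := \bigcup_j U_j$.

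To upgrade the approximate equality to $Dg=h$ on $K$, I iterate the construction, building $g_{k+1}$ by the same covering-plus-basic-step procedure applied to the residual $r_k := h-\sum_{i \leq k}Dg_i$ on $K_k$, with geometrically decreasing parameters. Then $g := \sum_k g_k$ is $C^1$ with $Dg=h$ on $K := \bigcap_k K_k$, $\mu(\Omega \setminus K)<\varepsilon$, $\supp g \subseteq \supp h$, and $\|Dg\|_{C^0} \leq (1+\varepsilon)\|h\|_{C^0}$. The main technical difficulty is maintaining the cone condition across iterations: although $P_{V(\mu,\cdot)}r_k$ is uniformly small (since $P_V h = 0$ and each $Dg_i$ is close to a vector with $|P_V e_i| \leq \cos\alpha$), the ratio $|P_V r_k|/|r_k|$ need not be, so the direction of $r_k$ itself cannot serve as an axis for Lemma~\ref{lmm:basic_step}. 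A natural fix is to decompose $r_k$ on each Vitali ball along a nearly-constant orthonormal frame of $V(\mu,\cdot)^\perp$ (which exists by continuity of $V$ on $Z$), and to approximate each scalar component by a separate application of Lemma~\ref{lmm:basic_step} with axis in the corresponding fixed $V^\perp$-direction, so that the cone condition always holds at angle $\alpha$ and the total gradient budget stays within $(1+\varepsilon)\|h\|_{C^0}$.
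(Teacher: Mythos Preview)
Your overall architecture matches the paper's proof: Lusin to make $V(\mu,\cdot)$ continuous on a large compact set, a disjoint covering by small balls on which $h$ is nearly constant and the cone hypothesis of Lemma~\ref{lmm:basic_step} holds, a cut-off to localize each $w_j$ inside its ball, and an iteration on the residual $r_k=h-\sum_{i\le k}Dg_i$ to upgrade the approximate equality to $Dg=h$ on $K$. The paper uses a finite family of balls at each stage rather than a countable Vitali family, but that is cosmetic. You also correctly isolate a genuine subtlety that the paper glides over in its iteration step (``apply the same arguments with $h-\sum Dg_k$ in place of $h$''): since Lemma~\ref{lmm:basic_step} only controls $Dw_i$ up to a transverse error of size $\|h\|_\infty/\tan\alpha$ that may point into $V(\mu,\cdot)$, the residual $r_1$ need not be $\mu$-a.e.\ orthogonal to $V$, and so its direction cannot serve as a cone axis with opening close to $\pi/2$.

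Your proposed remedy, however, does not close this gap. Decomposing $r_k$ along a local frame $e_1,\dots,e_m$ of $V(\mu,\cdot)^\perp$ and applying Lemma~\ref{lmm:basic_step} componentwise only reconstructs $P_{V^\perp}r_k$; the tangential part $P_V r_k$ is neither targeted nor cancelled, while each new application injects a fresh tangential error of order $|P_{V^\perp}r_k|/\tan\alpha$. Writing $a_k=\|P_V r_k\|_{C^0}$ and $b_k=\|P_{V^\perp}r_k\|_{C^0}$, your step gives $b_{k+1}\lesssim b_k/\tan\alpha$ but only $a_{k+1}\le a_k+C\,b_k/\tan\alpha$; starting from $a_0=0$ the sequence $a_k$ stays of order $\|h\|_{C^0}/\tan\alpha$ and does not tend to zero, so $r_k\not\to 0$ and the limit does not satisfy $Dg=h$ on $K$. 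One clean way out is to drop the infinite iteration altogether: run a \emph{single} pass of the cone construction with $\alpha$ so close to $\pi/2$ that $\|r_1\|_{C^0}\le c(n)\,\varepsilon^{2}\|h\|_{C^0}$, and then correct $r_1$ exactly by one application of the general Lusin theorem for gradients with respect to $\mu$ (Theorem~2.1 of \cite{marchese_schioppalipschitz}), whose gradient cost $C(n)\,\varepsilon^{-1}\|r_1\|_{C^0}$ is then at most $\tfrac{\varepsilon}{2}\|h\|_{C^0}$.
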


\begin{proof}
Of course we can assume $\varepsilon<1$ and $h \not \equiv 0$.
By the classical Lusin's theorem we find a compact set $C$ such that the map $x\mapsto V(\mu,x)$ is continuous on $C$, that  $h(x) \in V(\mu,x)^\perp$ for every $x \in C$ and
\begin{gather}
       \mu(\Omega \setminus C) < \frac{\varepsilon}{4}. \label{eq:misura_C}
    \end{gather}
The idea is to construct a series of $C^1$ functions $g_n$ such that $h- \sum_{k=0}^n Dg_n$ converges uniformly to $0$ on a compact set $K \subset \Omega$ with large measure and define $g \coloneqq \sum_{k=0}^{+\infty} g_n$. More precisely we are going to inductively define a decreasing sequence of open sets $V_n$ and a sequence of functions $g_n \in C^1_c(V_n)$  with the following properties:
  \begin{gather}
      \mu(\Omega \setminus V_n) \leq (1-2^{-n-1})\frac{\varepsilon}{2}
      \qquad
      \forall n \geq 0, \label{eq:measure_V_n}
      \\[5pt]
      \left\|h- \sum_{k=0}^n Dg_n \right\|_{C^0(V_n)} \leq \left( \frac{\varepsilon}{2} \right)^{n} \|h\|_{C^0(\Omega)}
      \qquad
      \forall n \geq 0,   \label{eq:norm_difference_partial_sum}
      \\[4pt]
      \|Dg_n\|_{C^0(\Omega)} \leq \left(1+\frac{\varepsilon}{2} \right) \left( \frac{\varepsilon}{2} \right)^{n} \|h\|_{C^0(\Omega)}
      \qquad
      \forall n \geq 0,    \label{eq:norm_general_term_series}
  \end{gather}
and finally we will fix a suitable compact set $K \subseteq \bigcap_{n \in \N} V_n$ on which $h= \sum_{k=0}^{+\infty} g_n$.
As base of the induction, we set
\begin{equation}
    g_0=0,
    \qquad
    V_0\coloneqq \Omega.
\end{equation}
\begin{description}[style=nextline, font=\normalfont\textit]
    \item[Step 1: First approximation.]
    Let us choose $\delta>0,0<\alpha<\frac{\pi}{2}$ such that 
    \begin{equation}\label{e:scelta}
    4\delta+\frac{\|h\|_{C^0(\Omega)}}{\tan\alpha}\leq \frac{\varepsilon}{2}\|h\|_{C^0(\Omega)}.
    \end{equation}
   
    Let $x \in C$; we distinguish two cases.
    \begin{itemize}
        \item[-] If $h(x)=0$, by continuity there exists $r_x>0$ such that
        \begin{equation}
            |h(y)| \leq \frac{\delta}{2}
            \qquad
            \forall y \in B_{r_x}(x).
        \end{equation}
        \item[-] If $h(x) \neq 0$, we set
        \begin{equation}
            e = \frac{h(x)}{|h(x)|}.
        \end{equation}
        By continuity of $V$ and $h$, and using that $h(x)\in V(\mu,x)^\perp$ for every $x\in C$, we can find $r_x>0$  such that $B_{r_x}(x) \subset \subset \{h \neq 0\}$ and
        \begin{gather}
        \label{eq:continuity_bundle}
            V(\mu,y) \cap C(e,\alpha) = \emptyset,
            \qquad
            \forall y \in C \cap B_{r_x}(x),
            \\
            \label{eq:cone_continuity_f}
            \langle h(y); e \rangle > (1-\delta)|h(y)|
            \big|h(y)-\|f\|_{C^0(B)} e\big|\leq \delta
            \qquad
            \forall y \in B_{r_x}(x).
        \end{gather}
    \end{itemize}
     By classical covering arguments, see for instance \cite[Theorem 1.29]{evans_gariepy}, there exists a finite set of points $x_1,\dots,x_N \in C$ and radii $r_i \in (0,r_{x_i})$ for $i=1,\dots,N$ such that the closed balls $\overline B_{r_i}(x_i) \subset \Omega$ are mutually disjoint and it holds
    \begin{equation}\label{e:tantamassa}
        \mu \left( C \setminus \bigcup_{i=1}^N B_{r_i}(x_i) \right)< 2^{-4}{\varepsilon}.
    \end{equation}
    Let us call $B_i \coloneqq B_{r_i}(x_i)$. Since the closed balls $\overline B_i$ are disjoint and contained in $\Omega$ we can fix $d>0$ satisfying the following conditions:
    \begin{equation}\label{eq:distanza_palle}
    \begin{gathered}
        d <  \frac{1}{8}\min \left\{ \dist\big(B_i,B_j\big) \colon i,j=1,\dots,N, i\neq j \right\},
        \\
        d < \frac{1}{8}\min \{\dist(B_i,\partial \Omega) \colon i=1,\dots,N\},
        \\
        h(x_i) \neq 0 \implies \overline B_{r_i+4d}(x_i) \subset \{h \neq 0\}.
        \end{gathered}
    \end{equation}
    so that the closed balls $\overline B_{r_i+4d}(x_i)$ are disjoint and contained in $\Omega$ as well.
    We now fix
    \begin{equation}
        \eta \leq \delta d.
    \end{equation}
    Now, for every $i=1,\dots,N$, we choose a function $w_i \colon \Omega \to \R$ as follows.
    \begin{itemize}
        \item[-] If $h(x_i)=0$, we set $U_i:=B_i$ and we fix $w_i\equiv 0$. We clearly have
        \begin{gather}
            |Dw_i(y)- h(y)|< \delta,
            \qquad
            \forall y \in C \cap B_i,
            \\
            |Dw_i(y)| \leq \|h\|_{C^0(\Omega)}
            \qquad
            \forall y \in B_i.
        \end{gather}
        \item[-] If $h(x_i) \neq 0$, then by \eqref{eq:continuity_bundle} and \eqref{eq:cone_continuity_f} we can apply Lemma \ref{lmm:basic_step} to the measure $\mu\llcorner (C\cap B_i)$ in order to find, choosing  $\gamma_i < \frac{\varepsilon}{2^4 N}$, open sets $U_i\subset B_i$ and functions $w_i \colon \Omega \to \R$ of class $C^1$ satisfying
        \begin{equation}\label{eq:properties_width}
            \begin{gathered}
            \sup_B|w_i|\leq \eta,
            \\[4pt]
            \mu(C\cap B_i\setminus U_i)<\gamma_i,\\[4pt]
            |Dw_i(x)-h(x)|\leq 4\delta+\frac{\|h\|_{C^0(\Omega)}}{\tan\alpha} \qquad \mbox{ for every $x \in U_i$},
            \\[4pt]
            |Dw_i(x)|\leq\|h\|_{C^0(\Omega)} \left( 1+\frac{1}{\tan\alpha}\right)\qquad \mbox{ for every $x \in B_i$}.
        \end{gathered}
        \end{equation}
    \end{itemize}
  We choose suitable functions $\tilde w_i$ such that
  \begin{gather}
      \tilde w_i\equiv w_i,\qquad \mbox{on $B_i$}\\[4pt]
      \tilde w_i=0, \qquad \mbox{on $\Omega\setminus B_{r_i+d}(x_i)$}\\[4pt]
      |D(\tilde w_i)|\leq\|h\|_{C^0(\Omega)} +4\delta+\frac{1}{\tan\alpha},\qquad\mbox{on $\Omega$},
  \end{gather}
  where the possibility to obtain the last property is guaranteed by the choice of $\eta$.
  More precisely, $\tilde w_i$ is constructed as follows: consider $0<\rho_i<d$ such that in the slightly enlarged ball $B_{r_i+\rho_i}(x_i) \subset B_{r_i+d}(x_i)$ we have
  \begin{equation}
      |D w_i(x)|
      \leq
      \|h\|_{C^0(\Omega)} \left( 1+\frac{1}{\tan\alpha}\right) + \delta,\qquad\mbox{on $B_{r_i+\rho_i}(x_i)$}.
  \end{equation}
  Let us fix a linear function $\varphi_i \colon \R \to \R$ such that $\varphi_i(r_i+\rho_i)=1$ and $\varphi_i(r_i + \rho_i+d)=0$ and then consider the function $z_i$ defined as the radial linear extension of $(w_i)_{| \partial B_{r_i+\rho_i}(x_i)}$ to the annulus ${\overline B}_{r_i+\rho_i+d}(x_i) \setminus B_{r_i+\rho_i}(x_i)$, namely the function defined as
  \begin{equation}
      z_i(x)
      =
      \begin{cases}
          w_i(x)           & \text{if } x \in B_{r_i+\rho_i}(x_i)
          \\[5pt]
          w_i\left(x_i + \frac{x-x_i}{|x-x_i|}r_i+\rho_i \right)\varphi(|x-x_i|)    & \text{if } x \in \overline B_{r_i+\rho_i+d}(x_i) \setminus B_{r_i+\rho_i}(x_i)
          \\[5pt]
          0                 & \text{if } x \in \Omega \setminus \overline B_{r_i+\rho_i+d}(x_i).
      \end{cases}
  \end{equation}
  Then, by the choice of $\eta$, it holds $\lip z_i < \|h\|_{C^0(\Omega)} \left( 1+\frac{1}{\tan\alpha}\right)+2\delta$. By a convolution with a mollifier compactly supported in $B_{\rho_i/2}(0)$, we obtain the desired $\tilde w_i$.

    Let us remark that, by the choice of $d$, the supports of the functions $\tilde w_i$ are mutually disjoint and contained in $\{h \neq 0\} \subset \supp h$. Thus the function $g_1 \colon \Omega \to \R$, defined by $g_1(x):=\sum_{i=1}^N\tilde w_i(x)$ is compactly supported in $\bigcup_{i=1}^N B_{r_i+3d}(x_i)$ and satisfies
  \begin{gather}
      |D g_1(x) - h(x)|\leq 4\delta+\frac{\|h\|_{C^0(\Omega)}}{\tan\alpha} \overset{\eqref{e:scelta}}\leq \frac{\varepsilon}{2}\|h\|_{C^0(\Omega)},
      \qquad
      \mbox{for every}\;x \in \bigcup_{i=1}^N U_{i} 
      \\[4pt]
      |D g_1(x)| \leq \|h\|_{C^0(\Omega)} \left( 1+\frac{1}{\tan\alpha}\right)+2\delta\leq \left(1+\frac{\varepsilon}{2}\right)\|h\|_{C^0(\Omega)}
      \qquad
      \mbox{for every}\; x \in \Omega.
      \\[4pt]
      \supp g \subseteq \bigcup_{i=1}^N B_{r_i+3d}(x_i) \subset \{h \neq 0\} \subset \supp h.
  \end{gather}
 Letting,
  \begin{equation}
    V_1:=\bigcup_{i=1}^N U_{i},
  \end{equation}
we notice that 
\begin{gather}
    |h(x) - Dg_1(x)| \leq \frac{\varepsilon}{2}\|h\|_{C^0(\Omega)} \qquad\mbox{for every $x \in V_1$}
    \\[4pt]
    \mu(C\setminus V_1)<2^{-3}\varepsilon
    \overset{\eqref{eq:misura_C}}{\implies}
    \mu(\Omega \setminus V_1) \leq (1-2^{-2})\frac{\varepsilon}{2},
\end{gather}

  \item[Step 2: Iteration.]
  We now assume that $g_n, V_n$ are defined and satisfy \eqref{eq:measure_V_n}, \eqref{eq:norm_difference_partial_sum} and \eqref{eq:norm_general_term_series}: we want to construct $g_{n+1}, V_{n+1}$.

We apply the same arguments of Step 1 with $h - \sum_{k=0}^n D g_n$ in place of $h$ and $V_n$ in place of $\Omega$;
moreover we choose the new radii $r_i$ in such a way that the new balls $B_i$ are contained in $V_n$ and satisfy
\begin{equation}
     \mu \left( C \cap V_n \setminus \bigcup_{i=1}^N B_{r_i}(x_i) \right)< 2^{-n-3}{\varepsilon}.
\end{equation}
We thus construct the open sets $U_i \subset V_n$, then $V_{n+1}$ their union, and the function $g_{n+1} \in C^1_c(V_n)$ as in the previous step, satisfying the following properties:
  \begin{gather}
  \label{eq:misura_V_n}
        \mu(\Omega \setminus V_{n+1}) \leq (1-2^{-n-2}) \frac{\varepsilon}{2},
        \\[4pt]
      \label{eq:supp_g_n}
      \supp g_{n+1} \subset V_n,
        \\
        \label{eq:stima_grad_diff_n}
      \left|h(x) - \left( \sum_{k=0}^n D g_n(x) \right) - D g_{n+1}(x) \right|
      \leq \frac{\varepsilon}{2} \left\|h(x) -  \sum_{k=0}^n D g_n(x) \right\|_{C^0(\Omega)}
     \overset{\eqref{eq:norm_difference_partial_sum}}{\leq}
     \left( \frac{\varepsilon}{2} \right)^{n+1} \|h\|_{C^0(\Omega)}
      \qquad
      \forall x \in V_{n+1},
      \\[4pt]
      \label{eq:stima_grad_n}
      |D g_{n+1}(x)|
      \leq \left( 1+ \frac{\varepsilon}{2}\right)\left\|h(x) -  \sum_{k=0}^n D g_n(x) \right\|_{C^0(V_n)}
      \overset{\eqref{eq:norm_difference_partial_sum}}{\leq}
      \left( 1+ \frac{\varepsilon}{2}\right) \left( \frac{\varepsilon}{2} \right)^{n} \|h\|_{C^0(\Omega)}
      \qquad
      \forall x \in V_n.
  \end{gather}
  \item[Step 3: Convergence and estimates.]
  By \eqref{eq:measure_V_n} it holds $\mu \left(\Omega \setminus \bigcap_{n \in \N}V_n \right) \leq \frac{\varepsilon}{2}$. Hence there exists a compact set $K \subset \bigcap_{n \in \N}V_n$ such that
  \begin{equation}
      \mu(\Omega \setminus K) < \varepsilon.
  \end{equation}
  We define $g\coloneqq\sum_{i=1}^\infty g_i$ and we observe that the series converges in the strong topology of $C^1(\Omega)$. Indeed each $\supp g_n \subseteq V_{n-1} \subset \Omega$ and, by \eqref{eq:norm_difference_partial_sum}, we get
  \begin{equation}
       \sum_{n=1}^\infty  \|D g_{n}\|_{C^0(\Omega)}
       \leq
       \left( 1+\frac{\varepsilon}{2} \right) \|h\|_{C^0(\Omega)} \sum_{n=0}^\infty \left( \frac{\varepsilon}{2} \right)^{n}
       =
       \left( 1+\frac{\varepsilon}{2} \right) \frac{1}{1- \frac{\varepsilon}{2}} \|h\|_{C^0(\Omega)}
       \overset{(\varepsilon<1)}{<}
       (1+\varepsilon) \|h\|_{C^0(\Omega)}.
  \end{equation}
 This estimate implies the strong convergence of the series $\sum_{n=1}^{+\infty} g_n$ to $g$ in $C^1_c(\Omega)$ and it moreover proves
  \begin{equation}
      \|D g\|_{C^0(\Omega)} \leq (1+\varepsilon) \|h\|_{C^0(\Omega)}.
  \end{equation}
Finally, letting $n \to +\infty$ in \eqref{eq:norm_difference_partial_sum}, the convergence of the series $\sum_{n=1}^{+\infty} g_n$ to $g$ in $C^1_c(\Omega)$ implies
\begin{equation}
    h(x) = Dg(x)
    \qquad
    \forall x \in K.
\end{equation}
\end{description}
\end{proof}

\begin{proof}[Proof of theorem \ref{t:main}]
    By Lemma \ref{lmm:quantitative_Lusin}, there exists $h \in C_c(\Omega)$ such that
        \begin{gather}
        \label{eq:mu_h_neq_f}
        \mu \left( \{x \in \Omega \colon h(x) \neq f(x)\} \right) < \frac{\varepsilon}{2},
        \\[5pt]
        \| h\|_{L^p(\Omega)} \leq \left(1+ \frac{\varepsilon}{2} \right) \|f\|_{L^p(\Omega)}
        \qquad
        \forall p \in [1,+\infty].
    \end{gather}
    If $h \equiv 0$, then $g \equiv 0$ is the desired function; otherwise we fix $N \in \N$ such that
    \begin{equation}\label{eq:choice_eta_last}
         \eta
         \coloneqq
         \frac{\|h\|_{C^0(\Omega)}}{N}
         <
         \frac{\varepsilon}{4} \inf_{p \in [1,+\infty]} \left\{ \frac{\|h\|_{L^p(\Omega)}}{\mu(\Omega)^{\frac{1}{p}}} \right\},
    \end{equation}
    and such $N$ exists since the above infimum is strictly positive (by continuity of the map $p \mapsto \|h\|_{L^p(\Omega)}/\mu(\Omega)^{\frac{1}{p}}$).
    We define $N$ functions $h_1, \dots, h_N \in C_c(\Omega, \R^n)$ as follows:
    \begin{equation}
        h_i(x)=
        \begin{cases}
            0                                           & \text{if } |h(x)| \leq (i-1)\eta
            \\[5pt]
            h(x) - (i-1)\eta \frac{h(x)}{|h(x)|}        & \text{if } (i-1)\eta < |h(x)| \leq i \eta
            \\[5pt]
            \eta  \frac{h(x)}{|h(x)|}                   & \text{if } |h(x)|> i \eta.
        \end{cases} 
    \end{equation}
    Hence the functions $h_i$ have the following properties:
    \begin{gather}
        \supp h_{N} \subset \subset \supp h_{N-1} \cdots \subset \subset  \supp h_1=\supp h,
        \\[4pt]
        0 \leq h_i \leq \eta \qquad \forall i=1, \dots, N,
        \\[4pt]
        h = h_1+\dots +h_N,
    \end{gather}
    Now for every $i=1, \dots, N$ we apply Lemma \ref{lmm:approx_continuous} to $h_i$, obtaining a function $g_i \in C^1_c(\Omega)$ and a compact set $K_i \subset \supp h_i$ which satisfies the conclusions of the lemma with $\frac{\varepsilon}{2N}$ in place of $\varepsilon$. Let $C \coloneqq \bigcap_{i=1}^N K_i$.

    We define $g= \sum_{i=1}^N g_i$ and we claim that $g$ satisfies the conclusions of the theorem.

        \item First of all we have $f=Dg$ on $C \cap \{h=f\}$ and
        \begin{equation}
            \mu\big( \Omega \setminus (C \cap \{h=f\}) \big)
            \leq
            \mu( \Omega \setminus C) + \mu\big( \{h \neq f\}\big)
            <
            \varepsilon.
        \end{equation}
        By interior approximation, we can find a compact set $K \subset (C \cap \{h=f\})$ such that $\mu(\Omega \setminus K) < \varepsilon$.

        It remains to estimate the $L^p$-norms of $Dg$: to this aim, we first claim that
        \begin{equation}\label{eq:estimate_below_h_layers}
            |Dg(x)| \leq |h(x)| + 2\eta
            \qquad
            \forall x \in \Omega.
        \end{equation}
        Indeed, first of all clearly $|Dg(x)| \leq |h(x)| + 2\eta$ for every $x \in \Omega \setminus \supp h_1$ since $\supp g \subseteq \supp h= \supp h_1$ by construction.
        Moreover, for every $x \in \supp h_{i} \setminus \supp h_{i+1}$ it holds
        \begin{equation}\label{eq:h_large}
            |h(x)| \geq (i-1)\eta,
          \end{equation}
        while $g_j(x)=0$ for every $j>i$ (since $\supp g_j \subset \supp h_j$) implies
        \begin{equation}\label{eq:g_small}
            |Dg(x)| \overset{(\supp g_j \subset \supp h_j)}{\leq} \sum_{j=1}^i |D g_j(x)|
            \leq
            i \left( 1+ \frac{\varepsilon}{2N} \right) \eta
            \overset{(\varepsilon<1)}{\leq}
            (i+1)\eta.
        \end{equation}
        \eqref{eq:h_large} and \eqref{eq:g_small} prove \eqref{eq:estimate_below_h_layers}, that in turn implies
        \begin{equation}
            \|Dg\|_{L^p(\Omega)}
            \leq
            \|h\|_{L^p(\Omega)} + 2\eta \big( \mu(\Omega) \big)^{\frac{1}{p}}
            \overset{\eqref{eq:choice_eta_last}}{<}
            \left( 1+ \frac{\varepsilon}{2} \right) \|h\|_{L^p(\Omega)}
            \qquad
            \forall p \in [1,+\infty),
        \end{equation}
        concluding the proof.
\end{proof}
\section{flat chain conjecture}\label{s:metric}
The question on which differential operator admit a Lusin type theorem in the spirit of\cite{alberti} has recently drawn some interest, see \cite{DeMaGa}. For general data, it is natural to expect that the Lipschitz constant of the Lusin-type approximated solutions blow up as $\varepsilon\to 0$. But, as in Theorem \ref{t:main2}, one could hope to keep the Lipschitz constant bounded if the given datum is trivial when restricted to the linear subspace along which the operator is closable.

We conjecture that the following version of Theorem \ref{t:main2} holds, where $f$ is replaced by a $k$-form. Here $V^k$ is the equivalent of the decomposability bundle for $k$-vectors, see \cite{AlbMar2}. We denote by $I(h,n)$ the set of multi-indexes of length $h$ in $\R^n$, i.e. the set of arrays $(i_1,\dots,i_h)$ with $1\leq i_1<i_2,\dots<i_h\leq n$.

\begin{conjecture}\label{conj}
    Let $\mu$ be a Radon measure on $\R^n$, let $\Omega \subset \R^n$ be an open set such that $\mu(\Omega)< \infty$ and let $\omega \colon \Omega \to \Lambda^k(\R^n)$ be a Borel $k$-form satisfying $$\langle\omega(x),\tau\rangle=0\qquad\mbox{ for every $\tau\in V^k(\mu,x),$ for $\mu$-a.e.\ $x \in \Omega$.}$$ 
    For every $\varepsilon>0$ there exists a compact set $K \subset \Omega$ and a $(k-1)$-form $\phi=\sum_{I\in I(k-1,n)}\phi_Idx_I$ in $\Omega$ of class $C^1$ such that
    \begin{gather}
        \mu(\Omega \setminus K) < \varepsilon,
        \\[4pt]
d\phi(x)=\omega(x) \qquad \forall x \in K,
        \\[4pt]
         \lip(\phi_I)\leq C(n)\|\omega\|_\infty, \quad\mbox{for every $I\in I(k-1,n)$}.
    \end{gather}
\end{conjecture}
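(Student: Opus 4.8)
The plan is to run, with $(k-1)$-forms and the exterior differential $d$ in place of scalar functions and the gradient, the same three-tier argument by which Theorem \ref{t:main2} was derived from Lemmas \ref{lmm:quantitative_Lusin}, \ref{lmm:basic_step} and \ref{lmm:approx_continuous}. As a preliminary reduction one passes to the case in which $\omega$ is continuous and compactly supported in $\Omega$, by applying Lemma \ref{lmm:quantitative_Lusin} to each coefficient of $\omega=\sum_{I\in I(k,n)}\omega_I\,dx_I$, and then, as in the proof of Lemma \ref{lmm:approx_continuous}, one confines every subsequent construction to a compact set $C$ on which the classical Lusin theorem makes $x\mapsto V^k(\mu,x)$ continuous and on which the new form still annihilates $V^k$, losing only an arbitrarily small amount of $\mu$-mass. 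Since the conclusion to be proved controls only $\lip(\phi_I)$ in terms of $\|\omega\|_\infty$, there is no need for an $L^p$ refinement nor for the layer decomposition used in the proof of Theorem \ref{t:main}.

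The heart of the matter is the $k$-form analogue of Lemma \ref{lmm:basic_step}. Fix $x_0\in C$ with $\omega(x_0)\neq0$. Since $\omega(x_0)$ is orthogonal to $V^k(\mu,x_0)$, the two make a maximal angle, so by continuity of $V^k$ and of $\omega$ there is a small ball $B\ni x_0$ along which $V^k(\mu,\cdot)$ stays, in a suitable conical sense, uniformly far from the $k$-direction(s) determined by $\omega(x_0)$, while $\omega$ is almost constant on $B$; making this precise --- in particular reducing to finitely many \emph{simple} directions along which a primitive can be built, given that the simple summands of $\omega(x_0)$ need not individually annihilate $V^k$ --- is already part of the difficulty. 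Even in the simplest situation, where $\omega(x_0)$ is a single simple $k$-covector that one rotates to $dx_1\wedge\cdots\wedge dx_k$, one needs the following two ingredients, which go beyond the scalar case. First, the $k$-vector counterpart of Lemma 7.5 of \cite{AlbMar}: the conical condition on $V^k(\mu,\cdot)$ should force $\mu\llcorner B$ to be carried by a Borel set $F$ that is \emph{null} for the corresponding cone $\mathcal{C}$ of simple $k$-vectors --- this one expects to extract from the theory of $V^k$ developed in \cite{AlbMar2}. Second, a genuinely $(k-1)$-dimensional version of the sawtooth of Lemma 4.12 of \cite{AlbMar}: one must build a $C^1$ $(k-1)$-form $\psi$, with $\|\psi\|_\infty$ arbitrarily small and $\lip(\psi_I)\lesssim_n\|\omega\|_\infty$, whose differential on each plane $\Pi$ of direction in $\mathcal{C}$ is close to $\|\omega\|_\infty$ times the $k$-volume form of $\Pi$ on $F\cap\Pi$. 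The naive attempt $\psi=\|\omega\|_\infty\,u\,dx_2\wedge\cdots\wedge dx_k$, with $u$ the scalar sawtooth in direction $e_1$, does not work: a $\mathcal{C}$-null set need not be null along $e_1$-lines, so no such small $u$ with $\partial_1u\approx1$ on $F$ exists in general. By Stokes, what is really being asked is that the restriction of $\psi$ to the boundary of each small $k$-cube $Q\subset\Pi$ integrate to approximately $\|\omega\|_\infty\mathcal{H}^k(Q)$, and constructing such a $\psi$ is the new point. Granting it, one localizes $\psi$ to a thin tubular neighbourhood $U$ of a compact $K\subset F$ with $\mu(B\setminus K)$ small, exactly as in the proof of Lemma \ref{lmm:basic_step}, getting $\sup_B|\psi|\le\eta$, $\mu(B\setminus U)\le\gamma$, $|d\psi-\omega|\lesssim\delta+\|\omega\|_\infty/\tan\alpha$ on $U$, and $\lip(\psi_I)\lesssim_n\|\omega\|_\infty$ on $B$.

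Granting the basic step, the remainder is the bookkeeping of Lemma \ref{lmm:approx_continuous} and of the proof of Theorem \ref{t:main}: cover $C$ by finitely many balls $B_i$ as above with pairwise disjoint, slightly enlarged closures; cut off each local $(k-1)$-form so that it is supported inside $B_i$, at the price of a controlled dimensional increase of the Lipschitz constants of its coefficients; sum to obtain a $(k-1)$-form $\Phi_1$ with $d\Phi_1$ close to $\omega$ on a large open set $V_1$; and iterate the whole procedure on the remainder $\omega-d\Phi_1$ over $V_1$. Because the conjecture only requires $\lip(\phi_I)\le C(n)\|\omega\|_\infty$, and not a sharp constant $(1+\varepsilon)$, one is free to fix once and for all a cone half-angle $\alpha$ close enough to $\pi/2$ that $1/\tan\alpha<1/2$ --- legitimate precisely because the annihilation hypothesis yields a maximal angular gap --- so that the errors $\delta+\|\omega\|_\infty/\tan\alpha$ at successive stages decay geometrically. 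The resulting series converges in $C^1$ to a $(k-1)$-form $\phi$ with $\supp\phi\subseteq\supp\omega$, with $d\phi=\omega$ on a compact $K$ satisfying $\mu(\Omega\setminus K)<\varepsilon$, and with every coefficient $\phi_I$ Lipschitz with constant bounded by a dimensional multiple of $\|\omega\|_\infty$.

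The genuine obstacle, and the reason the statement is phrased as a conjecture, is entirely contained in the basic step: one needs both that the conical condition on the $k$-vector decomposability bundle $V^k(\mu,\cdot)$ produces a $\mathcal{C}$-null carrier of $\mu$, and that over such a carrier one can build a small $C^1$ $(k-1)$-form with prescribed exterior derivative, the subtlety in the latter being that one cannot simply superpose $k$ independent scalar sawtooths and must handle the non-simple part of $\omega$ against the bundle jointly. For $k=1$ these are precisely Lemmas 7.5 and 4.12 of \cite{AlbMar}; for general $k$ they amount to a flatness/Rademacher-type theorem for $V^k$ which, to our knowledge, is not yet available in the literature.
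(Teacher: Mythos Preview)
The paper does not contain a proof of this statement: it is explicitly labelled a \emph{Conjecture}, introduced with the words ``We conjecture that the following version of Theorem~\ref{t:main2} holds,'' and is then \emph{assumed} in order to deduce the flat chain conjecture. There is therefore no paper proof to compare your proposal against.

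Your proposal is not a proof either, and to your credit you say so: the write-up is a careful heuristic transcription of the three-tier argument (quantitative Lusin, basic step, iteration) from functions to $(k-1)$-forms, followed by an honest identification of the two missing ingredients in the basic step --- a $k$-vector analogue of the cone-null Lemma~7.5 of \cite{AlbMar}, and a $(k-1)$-form analogue of the sawtooth Lemma~4.12 of \cite{AlbMar}. Your diagnosis that these are the genuine obstacles, and in particular that the naive product $\psi=\|\omega\|_\infty\,u\,dx_2\wedge\cdots\wedge dx_k$ with a scalar sawtooth $u$ does not work because $\mathcal{C}$-nullity for a cone of $k$-planes does not imply nullity along a single direction, is correct and matches the paper's own discussion (and its remark that for closed $\omega$ the conjecture would already suffice for the application). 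In short: the statement is open, the paper offers no proof, and the gaps you name are exactly the ones that make it a conjecture.
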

We note that a positive solution of the above conjecture would immediately imply the validity of the Ambrosio-Kirchheim flat chain conjecture about the equivalence of metric currents in $\R^n$ and Federer-Fleming flat chains with finite mass, see \cite{AK}. In particular, since Theorem \ref{t:main} implies Conjecture \ref{conj} in the case $k=1$, it also implies the (already known) 1-dimensional version of the flat chain conjecture, see also \cite{Schioppa, alberti-bate-marchese, MarMer}. The conjecture was also proved in dimension $k=n$ in \cite{DPR}. Here we show the implication between the two conjectures, referring to \cite{MarMer} for the relevant notation. We only remind the reader that $\tilde{T}$ represents the classical current associated with a metric current $T$ (of finite mass).

\begin{proof}[Proof of the flat chain conjecture assuming the validity of Conjecture \ref{conj}]

Towards a proof by contradiction of the flat chain conjecture, assume that there exists a non trivial metric $k$-current $T=\tau\mu$ in $\R^n$ such that $\tilde T$ is not a flat chain. By subtracting the flat chain $(P_{V^k}\tau)\mu$, we may assume that $\tau(x)\in V^k(\mu,x)^\perp$ for $\mu$-a.e. $x$. Here $P_{V^k}:\Lambda_k(\R^n)\to\Lambda_k(\R^n)$ is the orthogonal projection on $V^k$.  

As shown in the proof of Theorem 4.4 of \cite{MarMer}, $\mu$ is singular with respect to Lebesgue, and by Proposition 4.2 of \cite{MarMer}, there exists a sequence of vectors $v_j\in\R^n$ such that $v_j\to 0$ and, denoting, for every $w\in\R^n$, $\tau_w:\R^n\to\R^n$ the map $x\mapsto x+w$, the currents $\tilde T$ and $(\tau_{v_j})_\sharp \tilde T$, are mutually singular for every $j\in\N$. In particular, $$\mathbb{M}(\tilde T-(\tau_{v_j})_\sharp \tilde T)=2\mathbb{M}(\tilde T)\qquad\mbox{ for every $j\in\N$}.$$

For every $j\in\N$, let $\omega_j$ be a Borel $k$ form such that $\|\omega_j\|_\infty\leq 1$ and $$\langle \tilde T-(\tau_{v_j})_\sharp \tilde T, \omega_j\rangle=2 \mathbb{M}(\tilde T).$$
In particular for every $j\in\N$, $$\langle\omega_j,\tau\rangle=0\qquad\mbox{ for every $\tau\in V^k(\mu,x),$ for $\mu$-a.e.\ $x$}.$$

Fix $\varepsilon>0$ sufficiently small and let $\phi_j$ be the corresponding $(k-1)$-forms obtained applying Conjecture \ref{conj} with $\mu=|\tilde T-(\tau_{v_j})_\sharp \tilde T|$. Possibly subtracting a constant, we can assume that $\phi_j(0)=0$, for every $j$, so that we can find a $C(n)$-Lipschtz $(k-1)$-form $\phi_\infty$ such that, up to non-relabeled subsequences,
    \begin{equation}
 \phi_{j}\to \phi_{\infty}\quad\text{locally uniformly.}
        \label{convdebole*}
    \end{equation}
    
   We deduce that for every $j \in \N$
   \begin{equation*}
          \begin{split}
2\mathbb{M}(\tilde T)=& \langle\tilde T-{(\tau_{v_j})}_\sharp\tilde T,\omega_j\rangle=\langle\tilde T,\omega_j\rangle-\langle{(\tau_{v_j})}_\sharp\tilde T,\omega_j\rangle\\
     =&\langle\tilde T,\omega_j\rangle-\langle{\tilde T,(\tau_{v_j})}^\sharp\omega_j\rangle
     \leq
     \langle\tilde T,d\phi_j\rangle-\langle{\tilde T,(\tau_{v_j})}^\sharp d\phi_j\rangle-C(n)\binom{n}{k-1}\varepsilon\\ =&T(1,\phi_j)-T(1,\phi_j\circ\tau_{v_j})-C(n)\binom{n}{k-1}\varepsilon.
       \end{split}    
   \end{equation*}
Thanks to the continuity of metric currents, the equi-Lipschitzianity of the $\phi_j$'s, and \eqref{convdebole*}, the quantity $T(1,\phi_j)-T(1,\phi_j\circ\tau_{v_j})$ tends to $0$  as ${j \to +\infty}$ and we reach a contradiction, if $\varepsilon$ is chosen sufficiently small.
\end{proof}

\begin{remark}
\begin{itemize}
    \item Thanks to Proposition 3.1 and Proposition 3.2 of \cite{MarMer}, in order to prove the flat chain conjecture in full generality, it would suffice to prove Conjecture \ref{conj} for closed forms $\omega$.
    \item In the previous proof we did not need the full validity of the equality 
    $$d\phi(x)=\omega(x)\qquad \forall x\in K,$$
    but only that, for a given $k$-vector field $\tau(x)\in V^k(\mu,x)^\perp$, the $(k-1)$-form $\phi$ yields
    $$\langle d\phi(x),\tau(x)\rangle\geq \langle\omega(x),\tau(x)\rangle\qquad\forall x\in K.$$
\end{itemize}
\end{remark}

\section*{Conflict of interest and data availability}
On behalf of all authors, the corresponding author states that there is no conflict of interest.\\

The manuscript has no associated data.

\addcontentsline{toc}{section}{\refname}
\printbibliography

\end{document}